\newcommand{\equationstar}{equation}
\newtheorem{theorem}{Theorem}
\newtheorem{lemma}{Lemma}
\newtheorem{example}{Example}
\newtheorem{definition}{Definition}
\newcommand{\iwid}{\mathrm{wid}\:}
\newcommand{\N}{\mathbb{N}}
\newcommand{\R}{\mathbb{R}}
\newcommand{\IR}{\mathbb{IR}}
\newcommand{\lb}[1]{\underline{#1}}
\newcommand{\ub}[1]{\overline{#1}}
\newcommand{\hull}{\Box}
\newcommand{\vx}{{\bf x}}
\newcommand{\vy}{{\bf y}}
\begin{document}

\title{On the Exponentiation of Interval Matrices}
\author{Alexandre Goldsztejn}
\email{alexandre.goldsztejn@univ-nantes.fr}
\urladdr{http://www.goldsztejn.com}
\address{Laboratoire d'Informatique de Nantes Atlantique\\Facult\'e des Sciences et des Techniques\\2, rue de la Houssini\`ere\\BP~92208, 44322 Nantes CEDEX 3, France}

\begin{abstract}
	The numerical computation of the exponentiation of a real matrix has been intensively studied. The main objective of a good numerical method is to deal with round-off errors and computational cost. The situation is more complicated when dealing with interval matrices exponentiation: Indeed, the main problem will now be the dependency loss of the different occurrences of the variables due to interval evaluation, which may lead to so wide enclosures that they are useless. In this paper, the problem of computing a sharp enclosure of the interval matrix exponential is proved to be NP-hard. Then the scaling and squaring method is adapted to interval matrices and shown to drastically reduce the dependency loss w.r.t. the interval evaluation of the Taylor series.
\end{abstract}

\maketitle


\section{Introduction}

The exponentiation of a real matrix allows solving initial value problems (IVPs) for linear ordinary differential equations (ODEs): given $A\in\R^{n\times n}$, the solution of the IVP defined by $\vy'(t)=A \  \vy(t)$ and $\vy(0)=\vy_0$ is $\vy(t)=\exp(tA) \  \vy_0$, where for any $M\in\R^{n\times n}$
\begin{equation}\label{eq:real-exp}
	\exp(M):=\sum_{k=0}^{\infty} \frac{M^k}{k!}.
\end{equation}
Linear ODE being met in many contexts, the numerical computation of the matrix exponential has been intensively studied (see \cite{Ward1977} ,\cite{Bochev:1989:SVN}, \cite{Moler2003}, \cite{Higham2005} and references therein). While an approximate computation of \eqref{eq:real-exp} leads to an approximate solution for the underliying IVP, interval analysis (see Section~\ref{s:IA}) offers a more rigorous framework: In most practical situations the parameters that define the linear ODE are known with some uncertainty. In this situation, one usually ends with an interval of matrices $[A]=[\lb{A},\ub{A}]:=\{A\in\R^{n\times n}:\lb{A}\leq A\leq\ub{A}\}$ inside which the actual matrix $A$ is known to be. Then, the rigorous enclosure of the solution will be obtained computing an interval matrix that encloses the exponentiation of the interval matrix $[A]$:
\begin{equation}\label{eq:interval-exp}
	\exp([A]):=\{\exp(A):A\in[A]\}.
\end{equation}

The most obvious way of obtaining an interval enclosure of $\exp(A)$ is to evaluate the truncated Taylor series using interval arithmetic and to bound the remainder (cf. Subsection~\ref{ss:Taylor-series} for details). However, the next example shows that the truncated Taylor series is not well adapted to interval evaluation, even if no truncation of the series is performed.
\begin{example}\label{ex:correlation-loss}
	Consider the interval of matrices $A:=[\lb{A},\ub{A}]$ where
	\begin{\equationstar}
		\lb{A}:=\begin{pmatrix}0 & 1\\0 & -3\end{pmatrix} \ \ , \ \ \ub{A}:=\begin{pmatrix}0 & 1\\0 & -2\end{pmatrix} \ \ \text{and} \ \ A(t):=\begin{pmatrix}0 & 1\\0 & t\end{pmatrix}.
	\end{\equationstar}
	Computing the formal expression of the exponential of the matrix $A(t)$ for $t\in[-3,-2]$, it can be proved that $\lb{X}\leq\exp([A])\leq\ub{X}$ with
	\begin{equation}\label{eq:interval-hull-exp}
	\begin{array}{ccccc}
		\lb{X} & = & \begin{pmatrix}1 & \frac{1}{3} \left(1-e^{-3}\right) \\ 0 & e^{-3}\end{pmatrix} & \approx & \begin{pmatrix}1 & 0.316738 \\0 & 0.0497871\end{pmatrix}
		\\ \ub{X} & = & \begin{pmatrix}1 & \frac{1}{2} \left(1-e^{-2}\right) \\ 0 & e^{-2}\end{pmatrix} & \approx & \begin{pmatrix}1 & 0.432332 \\ 0 & 0.135335\end{pmatrix},
	\end{array}
	\end{equation}
	where the lower and upper bounds cannot be improved, i.e. $[\lb{X},\ub{X}]$ is the optimal enclosure of $\exp([A])$.
	
	Now, computing the interval Taylor series with an order of $10$ (which is high enough to make the remainder insignificant) leads to $\lb{T}\leq\exp([A])\leq\ub{T}$ with
	\begin{\equationstar}
		\lb{T}=\begin{pmatrix}1 & -1.20912 \\0 & -6.25568\end{pmatrix} \ \ \text{and} \ \ \ub{T}=\begin{pmatrix}1 & 1.95819 \\ 0 & 6.4408\end{pmatrix}.
	\end{\equationstar}
	This is actually an enclosure of \eqref{eq:interval-hull-exp}, but a very pessimistic one.
\end{example}

As shown by the previous example, even with high enough order for the expansion so the influence of the remainder is insignificant, the interval evaluation of the Taylor series computes very crude bounds on the exponential of an interval matrix. The reason of this bad behavior of the Taylor series interval evaluation is the dependency loss between the different occurrences of variable that occurs during the interval evaluation of an expression (cf. Section \ref{ss:dependency}). In general, one cannot expect to compute the optimal enclosure of \eqref{eq:interval-exp}: The NP-hardness of this problem is proved in Section \ref{s:NPH}.

Two well known techniques can help decreasing the pessimism of the interval evaluation: First, centered forms can give rise to sharper enclosures than the natural interval evaluation for small enough interval inputs. Such a centered form for the matrix exponential was proposed in \cite{Oppenheimer:1988:AIAa,Oppenheimer:1988:AIAb,Oppenheimer:1988:AIAc}. However, this centered evaluation dedicated to the interval matrix exponentiation is quite complex and very difficult to follow or implement. Furthermore, there is an error in the proof of Proposition~10M of \cite{Oppenheimer:1988:AIAa}\footnote{The proof of Proposition Proposition~10M of \cite{Oppenheimer:1988:AIAa} is claimed to be similar to the proof of Proposition~10 of \cite{Oppenheimer:1988:AIAa}. However, the proof of Proposition~10 uses the fact that $f([x])=\cup_{x\in[x]}f(x)$, which is valid only for scalar functions but not for vector-valued or matrix-valued functions, and thus cannot be extended to prove Proposition~10M which involves matrix-valued functions.} and some non justified assumptions in Section~VII \cite{Oppenheimer:1988:AIAb}.

The second technique consists of formally rewriting the expression so as to obtain a formula more suited to interval evaluation (usually decreasing the number of occurrences of variables). For example, the evaluation of a polynomial in its Horner form is known to improve its interval evaluation \cite{Ceberio2002}. It can be naturally applied to the Taylor series of the matrix exponential and was actually used in \cite{Oppenheimer:1988:AIAb} to exponentiate the center matrix as required in the centered form. A proof of the correctness of the matrix exponential Taylor series Horner evaluation with rigorous bound on the truncation much simpler than the one given in \cite{Oppenheimer:1988:AIAb} is provided in Subsection~\ref{ss:Horner}. In Subsection \ref{ss:scaling-squaring}, we extend the well known scaling and squaring process (which consists of rewriting the Taylor series using the formula $\exp M=(\exp M/2^s)^{2^s}$) to the exponentiation of interval of matrices. In addition of the usual benefits of this process, its use in conjunction with interval analysis allows an automatic control of the rounding errors. Furthermore, it is shown to drastically improve the dependency loss due to the interval evaluation, hence providing much more accurate and less expensive computations. As explained in Section~\ref{s:experimentations} dedicated to experiments, the enclosure formula based on the scaling and squaring process is not only much simpler than the centered evaluation proposed in~\cite{Oppenheimer:1988:AIAb} but it also provides sharper enclosures.


\section{Interval Analysis}\label{s:IA}

Interval analysis (IA) is a modern branch of numerical analysis that was born in the 60's. It consists of computing with intervals of reals instead of reals, providing a framework for handling uncertainties and verified computations (see \cite{Moore1966,Alefeld1974,Neum90,Jaulin2001} and \cite{Kearfott1996-2} for a survey).

\subsection{Intervals, interval vectors and interval matrices}

An interval is a connected subset of $\R$. Intervals are denoted by bracketed symbols, e.g. $[x]\subseteq \R$. When no confusion is possible, lower an upper bounds of an interval $[x]$ are denoted by $\lb{x}\in\R$ and $\ub{x}\in\R$, with $\lb{x}\leq\ub{x}$, i.e. $[x]=[\lb{x},\ub{x}]=\{x\in\R:\lb{x}\leq x\leq \ub{x}\}$. Furthermore, a real number $x$ will be identified with the degenerated interval $[x,x]$.

There are two equivalent ways of defining interval matrices. On the one hand, being given two matrices $\lb{A}\leq\ub{A}\in\R^{n\times m}$ (where the inequality is defined componentwise), an interval of matrices is obtained by considering
\begin{equation}
	[A]:=\{A\in\R^{n\times m}:\lb{A}\leq A\leq \ub{A}\}.
\end{equation}
On the other hand, being given intervals $[a_{ij}]$, a matrix of intervals is obtained by considering
\begin{equation}
	[A]:=\{A\in\R^{n\times m}:\forall i\in\{1,\ldots n\},\forall j\in\{1,\ldots m\},a_{ij}\in[a_{ij}]\}.
\end{equation}
These two definitions are obviously equivalent following the notational convention $\lb{A}=(\lb{a}_{ij})$, $\ub{A}=(\ub{a}_{ij})$ and $[a_{ij}]=[\lb{a}_{ij},\ub{a}_{ij}]$, and will be used indifferently.

Interval vectors are defined similarly to interval matrices as either intervals of vectors or vectors of intervals.

The magnitude of an interval $[x]$ is $|[x]|:=\max\{|\lb{x}|,|\ub{x}|\}$. The magnitude of an interval matrix is the real matrix formed of the magnitude of the entries of the interval matrix, i.e. $|[A]|:=(|[a_{ij}]|)_{ij}$. The infinite norm will be considered in the rest of the paper. The norm of an interval matrix is the maximum of the norms of the real matrices included in this interval matrix. It is easily computed as $||[A]||=|| \,|[A]|\,||$. It obviously satisfies $[A]\subseteq[B]$ implies $||[A]||\leq||[B]||$.

The interval hull of a set of real is the smallest interval that contains this set. It is denoted by $\hull$. For example $\hull\{0,1\}=[0,1]$. The interval hull is defined similarly for sets of vectors and sets of matrices.

\subsection{Interval Arithmetic}
Operations $\circ\in\{+,\times,-,\div\}$ are extended to intervals in the following way:
\begin{equation}\label{def:IA}
	[\lb{x},\ub{x}]\circ[\lb{y},\ub{y}]:=\{x\circ y:x\in[\lb{x},\ub{x}],y\in[\lb{y},\ub{y}]\}.
\end{equation}
The division is defined for intervals $[\lb{y},\ub{y}]$ that do not contain zero. Note that unary elementary functions like $\exp$, $\ln$, $\sin$, etc., can also be extended to intervals similarly. All these elementary interval extension form the interval arithmetic (IA). As real numbers are identified to degenerated intervals, the IA actually generalizes the real arithmetic, and mixed operations like $1+[1,2]=[2,3]$ are interpreted using \eqref{def:IA}.

The IA lacks some important properties verified by its real counterpart: It is not a field anymore as interval addition and interval multiplication has no inverse in general, while distributivity is not valid anymore (instead a subdistributivity law holds in the form $[x]([y]+[z])\subseteq [x]\,[y]+[x]\,[z]$). On the other hand, interval operations are inclusion-increasing, i.e. $[x]\subseteq[x']$ and $[y]\subseteq[y']$ imply $[x]\circ[y]\subseteq [x']\circ[y']$.

\paragraph{Rounded Computations}

As real numbers are approximately represented by floating point numbers \cite{Goldberg91}, the IA cannot match the definition \eqref{def:IA} exactly. In order to preserve the inclusion property, the IA has to be implemented using an outward rounding. For example, $[1,3]/[2,2]=[0.5,1.5]$ while both $0.5$ and $1.5$ cannot be exactly represented with floating point numbers. Therefore, the computed result will be $[0.5^-,1.5^+]$ where $0.5^-$ (respectively $1.5^+$) is a floating point number smaller than $0.5$ (respectively bigger than $1.5$). Of course, a good implementation will return the greatest floating point number smaller than $0.5$ and the smallest floating point number greater than $1.5$. Among other implementations of IA, we can cite the C/C++ libraries PROFIL/BIAS \cite{Knueppel1994} and Gaol \cite{goualard:gaol_manual}, the Matlab toolbox INTLAB \cite{Ru99a} and Mathematica \cite{Mathematica}.

\subsection{Interval Evaluation of an Expression}

The natural usage of the IA is to evaluate an expression for interval arguments. The fundamental theorem of interval analysis (cf. \cite{Moore1966}) allows explaining the interpretation of this interval evaluation. Its proof is classical but is reproduced here.
\begin{theorem}
	Let $\mathbb{E}$ and $\mathbb{F}$ be either $\R$ or $\R^n$ or $\R^{n\times n}$ and $[x]\in\mathbb{IE}$. Consider a real function $f:[x]\longrightarrow \mathbb{F}$ and an interval function $[f]:\mathbb{I}[x]\longrightarrow \mathbb{IF}$, where $\mathbb{I}[x]$ is the set of all intervals included in $[x]$, i.e. $\mathbb{I}[x]=\{[y]\in\mathbb{IE}:[y]\subseteq[x]\}$. Suppose furthermore that both
	\begin{itemize}
		\item[(1)] For all $x\in[x]$, $f(x)\in [f](x)$
		\item[(2)] $[f]$ is inclusion-increasing in $\mathbb{I}[x]$.
	\end{itemize}
	Then, $[f]([x])\supseteq \{f(x):x\in[x]\}$.
\end{theorem}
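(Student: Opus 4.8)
The plan is to prove the inclusion $[f]([x]) \supseteq \{f(x) : x \in [x]\}$ directly from the definitions, showing that every element of the right-hand set belongs to the left-hand set. Pick an arbitrary $y$ in $\{f(x) : x \in [x]\}$; by definition there is some $x_0 \in [x]$ with $y = f(x_0)$. The goal is then to exhibit $y \in [f]([x])$.

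First I would observe that the point $x_0$, being a real element of $\mathbb{E}$, is identified with the degenerate interval $[x_0, x_0]$, which is an interval included in $[x]$, hence $[x_0, x_0] \in \mathbb{I}[x]$. Since $[f]$ is defined on all of $\mathbb{I}[x]$, the quantity $[f]([x_0,x_0])$ is a well-defined element of $\mathbb{IF}$. Next I would apply hypothesis (1) with $x = x_0$ to get $f(x_0) \in [f](x_0)$, i.e. $y \in [f]([x_0,x_0])$, reading $[f](x_0)$ as $[f]$ evaluated at the degenerate interval $[x_0,x_0]$. Then I would apply hypothesis (2), the inclusion-increasing property: since $[x_0,x_0] \subseteq [x]$ and both lie in $\mathbb{I}[x]$, we have $[f]([x_0,x_0]) \subseteq [f]([x])$. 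Chaining these two facts gives $y \in [f]([x_0,x_0]) \subseteq [f]([x])$, so $y \in [f]([x])$. Since $y$ was arbitrary in $\{f(x) : x \in [x]\}$, this establishes the claimed inclusion.

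There is no serious obstacle here; the argument is a two-line chase through the two hypotheses. The only point requiring a modicum of care is the notational reconciliation between $[f](x_0)$ in hypothesis (1), where $x_0$ is a point, and $[f]$ applied to an interval argument: one must invoke the convention from the excerpt that a real number (or vector, or matrix) is identified with the corresponding degenerate interval, so that $[f](x_0) = [f]([x_0,x_0])$ makes sense and hypothesis (1) is literally a statement about $[f]$ evaluated at a degenerate interval in $\mathbb{I}[x]$. Once that identification is made explicit, the monotonicity step applies verbatim. I would write the proof in three short sentences: introduce $x_0$, record $f(x_0) \in [f]([x_0,x_0])$ from (1), and conclude via $[f]([x_0,x_0]) \subseteq [f]([x])$ from (2).
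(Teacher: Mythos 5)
Your argument is correct and is essentially identical to the paper's proof: fix $x_0\in[x]$, use (1) to get $f(x_0)\in[f]([x_0,x_0])$, then use (2) with $[x_0,x_0]\subseteq[x]$ to conclude $[f]([x_0,x_0])\subseteq[f]([x])$. The explicit remark about identifying a point with its degenerate interval is a sensible clarification that the paper leaves implicit.
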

\begin{proof}
	For all $x\in[x]$ we have $f(x)\in [f](x)$ by $(1)$, and because $[x,x]\subseteq[x]$, $(2)$ implies $[f](x)\subseteq [f]([x])$. $\Box$
\end{proof}

Let us illustrate the usage of the fundamental theorem of IA on a simple example, which can be trivially generalized to arbitrary expressions. Consider the expression $x+xy$. When evaluated for degenerated interval arguments, it gives rise to $[x,x]+[x,x]\,[y,y]=[x+xy,x+xy]\ni x+xy$. Furthermore, it is inclusion-increasing as it is compound of inclusion-increasing operations. Therefore, the fundamental theorem of IA proves that $[x]+[x]\,[y]\supseteq \{x+xy:x\in[x],y\in[y]\}$. As another example, the interval evaluation of the expression of the interval matrix/matrix product $[A] \, [B]$
\begin{equation}\label{eq:matrix-matrix-product}
	\bigl([A] \, [B]\bigr)_{ij} \ = \ \sum_k \ [a_{ik}]\,[b_{kj}]
\end{equation}
gives rise to the inclusion $[A] \, [B]\supseteq \{AB:A\in[A],B\in[B]\}$.

When the expression evaluated for interval arguments contains only one occurrence of each variable, the computed enclosure of the range is optimal. As a consequence, the expression \eqref{eq:matrix-matrix-product} is optimal since only one occurrence of each variable is involved in the expression of each entry. Thus the more accurate statement $[A] \, [B]=\hull \{AB:A\in[A],B\in[B]\}$ actually holds. Note that $[A] \, [B]\neq \{AB:A\in[A],B\in[B]\}$ since the product $[A] \, [B]$ actually contains matrices that are not the product of matrices from $[A]$ and $[B]$, but $[A] \, [B]$ is the smallest interval matrix that contains $\{AB:A\in[A],B\in[B]\}$.

However, the interval evaluation of expression that contains several occurrences of some variable is not optimal anymore in general. In this case, some overestimation generally occurs which can dramatically decrease the usefulness of interval evaluation.

\subsection{Overestimation of Interval Evaluation}\label{ss:dependency}

When an expression contains several occurrences of some variables its interval evaluation generally gives rise to a pessimistic enclosure of the range. For example, the evaluation of $x+xy$ for the arguments $[x]=[0,1]$ and $[y]=[-1,0]$ gives rise to the enclosure $[-1,1]$ of $\{x+xy:x\in[x],y\in[y]\}$ while the evaluation of $x(1+y)$ for the same interval arguments gives rise to the better enclosure $[0,1]$ of the same range (the latter enclosure being optimal since the expression $x(1+y)$ contains only one occurrence of each variables). This overestimation is the consequence of the loss of correlation between different occurrences of the same variables when the expression is evaluated for interval arguments.

While $[A] \, [B]=\hull\{AB:A\in[A],B\in[B]\}$, the interval evaluation of $[A]\,[A]$, which encloses $\{A^2:A\in[A]\}$, is not optimal in general since several occurrences of some entries of $[A]$ appear in each expression of the entries of $[A]\,[A]$. An algorithm for the computation of $\hull \{A^2:A\in[A]\}$ which can be evaluated with a number of interval operations that is polynomial w.r.t. the dimension of $[A]$ was proposed in \cite{Kreinovich-SAC2005}. However, it was proved that no such polynomial algorithm exists for the computation of $\hull \{A^3:A\in[A]\}$ unless P=NP, i.e. the computation of $\hull \{A^3:A\in[A]\}$ is NP-hard (cf. \cite{Kreinovich-SAC2005}).

The situation is even worth than this: Even computing an enclosure of $\{A^3:A\in[A]\}$ for a fixed precision is NP-hard. The notion of $\epsilon$-accuracy of an enclosure is introduced to formalize this problem (see e.g. \cite{Gaganov1985,Krei98}). The following definition is adapted to sets of matrices.
\begin{definition}
	Let $\mathbb{A}\subseteq\mathbb \R^{n\times m}$ be a set of matrices, $[A]=\hull\mathbb A\in\IR^{n\times m}$, and consider an interval enclosure $[B]$ of $\mathbb A$ (which obviously satisfies $[B]\supseteq [A]$). The interval enclosure $[B]$ is said $\epsilon$-accurate if
	\begin{equation}
		\max\bigl\{ \ \max_{ij}|\lb{a}_{ij}-\lb{b}_{ij}| \ , \ \max_{ij}|\ub{a}_{ij}-\ub{b}_{ij}| \ \bigr\} \ \leq \ \epsilon
	\end{equation}
\end{definition}

Thus, an $\epsilon$-accuracy enclosure of a set of matrices is $\epsilon$-accurate for each entry. Although it is not stated in \cite{Kreinovich-SAC2005}, the proof presented there also shows that the computation of an $\epsilon$-accuracy enclosure of $\{A^3:A\in[A]\}$ is NP-hard.


\section{Computational Complexity of the $\epsilon$-Accurate Interval Matrix Exponentiation}\label{s:NPH}

Computing $\epsilon$-accurate interval enclosures of the range of a multivariate polynomial $f:\R^n\longrightarrow\R$ is NP-hard (cf. \cite{Gaganov1985} and Theorem 3.1 in \cite{Krei98}). Even if one restricts its attention to bilinear functions, the computation of $\epsilon$-accurate enclosures of their range remains NP-hard (cf. Theorem 5.5 in \cite{Krei98}). Note that if one fixes the dimension of the problems, then the computation of these $\epsilon$-accurate enclosures is not NP-hard anymore, hence showing that the NP-hardness is linked to the growth of the problem dimension. It is not a surprise that computing an $\epsilon$-accurate enclosure of the interval matrix exponential is NP-hard, but this result remains to be proved.

\begin{theorem}
	For every $\epsilon>0$, computing an $\epsilon$-accurate enclosure of $\exp([A])$ is NP-hard.
\end{theorem}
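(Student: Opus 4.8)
The plan is to give a polynomial-time reduction from the (known) NP-hard problem of computing an $\epsilon'$-accurate enclosure of the range of a bilinear form (Theorem~5.5 in \cite{Krei98}, which holds for every fixed $\epsilon'>0$) to the problem of computing an $\epsilon$-accurate enclosure of $\exp([A])$. The device is a nilpotent block structure: from a given bilinear form I will build an interval matrix $[M]$ for which one scalar entry of $\exp([M])$ has range exactly a fixed multiple of the range of the form. Note that since exponentiation of a \emph{real} matrix already decouples repeated occurrences of its entries, the hard instance must be fed through two \emph{independent} argument vectors — which is precisely what a bilinear form offers — so one cannot hope to route $\hull\{A^3:A\in[A]\}$ through this construction.

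So fix $\epsilon>0$ and let $f(x,y)=x^{T}Q\,y$, with $Q=(q_{ij})\in\R^{n\times m}$ rational, $x$ ranging over $[\lb{x},\ub{x}]\subseteq\R^{n}$ and $y$ over $[\lb{y},\ub{y}]\subseteq\R^{m}$, be an arbitrary bilinear form. Associate to it the $(n+m+2)\times(n+m+2)$ interval matrix
\begin{equation}\label{eq:nph-construction}
	[M] \ := \ \begin{pmatrix} 0 & [x]^{T} & 0 & 0 \\ 0 & 0 & Q & 0 \\ 0 & 0 & 0 & [y] \\ 0 & 0 & 0 & 0 \end{pmatrix},
\end{equation}
where the diagonal blocks have sizes $1,n,m,1$, the block $[x]^{T}$ is the interval row vector with entries $[\lb{x}_i,\ub{x}_i]$, the block $[y]$ is the interval column vector with entries $[\lb{y}_j,\ub{y}_j]$, the block $Q$ is degenerate, and all remaining blocks are degenerate zeros. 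A real matrix lies in $[M]$ if and only if it is the matrix $M(x,y)$ obtained by inserting some $x\in[\lb{x},\ub{x}]$ and some $y\in[\lb{y},\ub{y}]$ into the two non-degenerate blocks ($Q$ being fixed), so that $\exp([M])=\{\exp M(x,y):x\in[\lb{x},\ub{x}],\,y\in[\lb{y},\ub{y}]\}$.

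Next I would observe that every nonzero block of $M(x,y)$, of $M(x,y)^{2}$, and of $M(x,y)^{3}$ sits strictly above the block diagonal — in block positions $(1,2),(2,3),(3,4)$, then $(1,3),(2,4)$, then $(1,4)$ — hence $M(x,y)^{4}=0$ and $\exp M(x,y)=I+M(x,y)+\tfrac{1}{2}M(x,y)^{2}+\tfrac{1}{6}M(x,y)^{3}$. Since blocks $1$ and $4$ both have size $1$, the $(1,4)$ block is a single scalar, and the only term contributing to it is $\tfrac{1}{6}M(x,y)^{3}$, whose $(1,4)$ block equals $\tfrac{1}{6}\,x^{T}Q\,y=\tfrac{1}{6}f(x,y)$. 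Therefore the $(1,n+m+2)$ entry of $\exp([M])$ has range $\tfrac{1}{6}\{f(x,y):x\in[\lb{x},\ub{x}],\,y\in[\lb{y},\ub{y}]\}$, and its interval hull is $\tfrac{1}{6}$ times the interval hull of the range of $f$. Consequently, if a polynomial-time algorithm returns an $\epsilon$-accurate enclosure $[B]\supseteq\exp([M])$, then — $\epsilon$-accuracy being entrywise — the scalar interval $[b_{1,n+m+2}]$ is an $\epsilon$-accurate enclosure of $\tfrac{1}{6}$ times the range of $f$, so $6\,[b_{1,n+m+2}]$ is a $6\epsilon$-accurate enclosure of the range of $f$; taking $\epsilon'=6\epsilon$ in the cited result contradicts NP-hardness. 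The construction and the entrywise argument are routine; the one genuinely substantive decision is the choice of source problem discussed in the first paragraph, since only the independence of $x$ and $y$ makes the range — and hence the hull — transfer exactly through $\exp$.
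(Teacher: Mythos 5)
Your proof is correct and follows essentially the same route as the paper: the same nilpotent block matrix with block sizes $1,n,m,1$, the same observation that the $(1,n+m+2)$ entry of the exponential equals $\tfrac{1}{6}\vx^{T}Q\vy$, and the same reduction from the NP-hardness of $\epsilon$-accurate range enclosure for bilinear forms. You are in fact slightly more careful than the paper in tracking the factor $6$ (passing to $\epsilon'=6\epsilon$), which the paper glosses over.
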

\begin{proof}
	We prove that the $\epsilon$-accurate enclosure of the range of a bilinear function, which is NP-hard by Theorem 5.5 in \cite{Krei98}, reduces to the $\epsilon$-accurate enclosure of the interval matrix exponential. Let $B\in\R^{n\times n}$ and $[\vx],[\vy]\in\IR^n$. Define $\lb{A},\ub{A}\in\R^{(2n+2)\times(2n+2)}$ by
	\begin{\equationstar}
		\lb{A}:=\left(\begin{array}{c|c|c|c}
			0 & \lb{\vx}^T & 0 & 0
			\\\hline 0 & 0 & B & 0
			\\\hline 0 & 0 & 0 & \lb{\vy}
			\\\hline 0 & 0 & 0 & 0
		\end{array}\right)
		\ \text{and} \ 
		\ub{A}:=\left(\begin{array}{c|c|c|c}
			0 & \ub{\vx}^T & 0 & 0
			\\\hline 0 & 0 & B & 0
			\\\hline 0 & 0 & 0 & \ub{\vy}
			\\\hline 0 & 0 & 0 & 0
		\end{array}\right),
	\end{\equationstar}
	which are obviously computed in polynomial time from $B$, $[\vx]$ and $[\vy]$. We now prove that an $\epsilon$-accurate enclosure of the exponentiation of $[A]$ gives rise to an $\epsilon$-accurate enclosure of the range of the image of $[\vx]$ and $[\vy]$ by the function $\vx^T \  B \ \vy$, which will conclude the proof. Let
	\begin{\equationstar}
		A:=\left(\begin{array}{c|c|c|c}
			0 & \vx^T & 0 & 0
			\\\hline 0 & 0 & B & 0
			\\\hline 0 & 0 & 0 & \vy
			\\\hline 0 & 0 & 0 & 0
		\end{array}\right)
	\end{\equationstar}
	be such that $A\in[A]$, that is equivalently $\vx\in[\vx]$ and $\vy\in[\vy]$. One can check easily that $A$ is nilpotent:
	\begin{\equationstar}
		A^2=\left(\begin{array}{c|c|c|c}
			0 & 0 & \vx^T B & 0
			\\\hline 0 & 0 & 0 & B\vy
			\\\hline 0 & 0 & 0 & 0
			\\\hline 0 & 0 & 0 & 0
		\end{array}\right)
		\ \ \
		A^2=\left(\begin{array}{c|c|c|c}
			0 & 0 & 0 & \vx^TB\vy
			\\\hline 0 & 0 & 0 & 0
			\\\hline 0 & 0 & 0 & 0
			\\\hline 0 & 0 & 0 & 0
		\end{array}\right)
		\ \ \ A^3=0.
	\end{\equationstar}
	Thus $\bigl(\exp(A)\bigr)_{1,2n+2}=\frac{1}{6} \ \vx^T \  B \ \vy$. As a consequence,
	\begin{\equationstar}
		\bigl\{\bigl(\exp(A)\bigr)_{1,2n+2}:A\in[A]\bigr\}=\{\vx^T \  B \ \vy:\vx\in[\vx],\vy\in[\vx]\}
	\end{\equationstar}
	and the entry $(1,2n+2)$ of an $\epsilon$-accurate enclosure of $\exp([A])$ is an $\epsilon$-accurate enclosure of the image of $[\vx]$ and $[\vy]$ by the function $\vx^T \  B \ \vy$. $\Box$
\end{proof}


\section{Polynomial Time Algorithms for the Enclosure of the Interval Matrix Exponential}\label{s:scaling-squaring}

This section presents three expressions dedicated to the enclosure of the exponential of an interval matrix: The naive interval evaluation of the Taylor series, the interval evaluation of the Taylor series following the Horner scheme and the interval evaluation of the series following the scaling ans squaring process.

\subsection{Taylor Series}\label{ss:Taylor-series}

The naive interval evaluation of the truncated Taylor series for interval matrices exponential is now presented including a rigorous bound on the truncation error. The bound used here is the same as in in \cite{Oppenheimer:1988:AIAb}. Let us define for  $K+2>||[A]||$
\begin{equation}\label{eq:interval-Taylor}
\begin{array}{rl}
	{[\tilde{\mathcal{T}}]}([A],K) := & I+[A]+\frac{1}{2}[A]^2+\ldots+\frac{1}{K!}[A]^K
	\\ {[\mathcal{T}]}([A],K) := & [\tilde{\mathcal{T}}]([A],K)+[\mathcal{R}]([A],K),
\end{array}
\end{equation}
where the interval remainder $[\mathcal{R}]([A],K)$ is
\begin{equation}\label{eq:remainder}
[\mathcal{R}]([A],K):=\rho(||[A]||,K) \; [-E,E] \ \ \text{with} \ \ \rho(\alpha,K)=\frac{\alpha^{K+1}}{(K+1)!\;\bigl(1-\frac{\alpha}{K+2}\bigr)}
\end{equation}
and $E\in\R^{n\times n}$ has all its entries equal to $1$. We provide now a new proof that ${[\mathcal{T}]}([A],K)$ is an enclosure of $\{\exp(A):A\in[A]\}$ which is much simpler than the one provided in~\cite{Oppenheimer:1988:AIAb} and which will be used in the rest of the paper. The following lemmas will allow applying the fundamental theorem of interval analysis to expressions that include $[\mathcal{R}](\,.\,,K)$.
\begin{lemma}\label{lem:R}
	For a fixed positive integer $K$, the interval matrix operator $[\mathcal{R}](\,.\,,K)$ is inclusion-increasing inside $\{[A]\in\IR^{n\times n}:||[A]||<K+2\}$.
\end{lemma}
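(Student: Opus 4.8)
The plan is to unwind the definition of $[\mathcal{R}](\,.\,,K)$ in \eqref{eq:remainder} and reduce the claim to the monotonicity of the scalar function $\rho(\,.\,,K)$ on the relevant interval. First I would recall that $[\mathcal{R}]([A],K) = \rho(\|[A]\|,K)\,[-E,E]$, so the whole interval matrix operator depends on $[A]$ only through the single nonnegative real number $\alpha := \|[A]\|$. Therefore, given two interval matrices with $[A]\subseteq[A']$ and $\|[A']\|<K+2$, the monotonicity property of the norm recalled in Section~\ref{s:IA} (namely $[A]\subseteq[A']$ implies $\|[A]\|\leq\|[A']\|$) gives $0\leq\alpha\leq\alpha'<K+2$; it then suffices to show $\rho(\alpha,K)\leq\rho(\alpha',K)$, since multiplying the symmetric interval $[-E,E]$ by a larger nonnegative scalar yields a larger (w.r.t.\ inclusion) interval matrix, so $[\mathcal{R}]([A],K)=\rho(\alpha,K)[-E,E]\subseteq\rho(\alpha',K)[-E,E]=[\mathcal{R}]([A'],K)$.

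The remaining step is the scalar monotonicity: $\rho(\,.\,,K)$ is nondecreasing on $[0,K+2)$. Writing $\rho(\alpha,K)=\dfrac{\alpha^{K+1}}{(K+1)!}\cdot\dfrac{1}{1-\frac{\alpha}{K+2}}$, this is a product of two factors that are each nonnegative and nondecreasing on $[0,K+2)$: the first, $\alpha\mapsto\alpha^{K+1}/(K+1)!$, is clearly nondecreasing for $\alpha\geq 0$; the second, $\alpha\mapsto (1-\frac{\alpha}{K+2})^{-1}$, is positive and nondecreasing on $[0,K+2)$ because its denominator $1-\frac{\alpha}{K+2}$ is positive and nonincreasing there. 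A product of two nonnegative nondecreasing functions is nondecreasing, which gives the claim. This can be made quantitative by differentiating with respect to $\alpha$ if one prefers, but the factorization argument avoids any computation.

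I do not anticipate a genuine obstacle here; the only point requiring a little care is the bookkeeping that reduces an inclusion statement about interval matrices to a scalar inequality — specifically, observing that scaling a symmetric interval box $[-E,E]$ by nonnegative scalars is inclusion-increasing in the scalar, and invoking the already-recorded monotonicity of $\|\cdot\|$ under inclusion so that the hypothesis $\|[A']\|<K+2$ legitimately puts both $\alpha$ and $\alpha'$ inside the domain $[0,K+2)$ on which $\rho(\,.\,,K)$ is monotone.
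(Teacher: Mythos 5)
Your proof is correct and follows essentially the same route as the paper's: reduce inclusion of $[\mathcal{R}](\,.\,,K)$ to the inequality $\|[A]\|\leq\|[A']\|$, the monotonicity of $\rho(\,.\,,K)$, and the fact that scaling the symmetric box $[-E,E]$ by a larger nonnegative scalar is inclusion-increasing. The only difference is that you spell out, via the factorization of $\rho$, the monotonicity that the paper dismisses as obvious.
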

\begin{proof}
	Let $[A],[B]\in\IR^{n\times n}$ such that $||[B]||\leq K+2$ and $[A]\subseteq[B]$. Then, $||[A]||\leq||[B]||$ which implies $||[A]||\leq K+2$. Furthermore as $\rho(\alpha,K)$ is obviously increasing with respect to $\alpha$, we have $\rho(||[A]||,K)\leq\rho(||[B]||,K)$. Finally, as $[-E,E]$ is centered on the null matrix, we have
	\begin{\equationstar}
		\rho(||[A]||,K)\;[-E,E]\subseteq \rho(||[B]||,K)\;[-E,E],
	\end{\equationstar}
	which concludes the proof. $\Box$
\end{proof}

The next lemma a direct consequence the well known upper bound on the truncation error of the exponential series.

\begin{lemma}\label{lem:Taylor}
	Let $A\in\R^{n\times n}$ and $K\in\N$ such that $K+2>||A||$. Then $\exp(A) \in [\mathcal{T}](A,K)$.
\end{lemma}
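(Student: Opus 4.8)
The plan is to prove Lemma~\ref{lem:Taylor} directly from the classical scalar tail estimate for the exponential series, lifted to matrices via the submultiplicativity of the infinite norm. First I would recall that $\exp(A)=[\tilde{\mathcal T}](A,K)+\mathcal{S}$, where the genuine truncation tail is $\mathcal{S}=\sum_{k=K+1}^{\infty}\frac{A^k}{k!}$, so that it suffices to show $\mathcal{S}\in[\mathcal R](A,K)$, i.e.\ that every entry of $\mathcal{S}$ has absolute value at most $\rho(\|A\|,K)$. Writing $\alpha:=\|A\|$ and using $\|A^k\|\le\|A\|^k=\alpha^k$ together with the triangle inequality for the (absolutely convergent) series, I get $\|\mathcal{S}\|\le\sum_{k=K+1}^{\infty}\frac{\alpha^k}{k!}$, and since the $(i,j)$ entry of any matrix is bounded in absolute value by its infinite norm, each entry of $\mathcal{S}$ is bounded by this same scalar tail.

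Next I would bound the scalar tail $\sum_{k=K+1}^{\infty}\frac{\alpha^k}{k!}$ by a geometric comparison: factoring out the first term gives
\begin{equation}
\sum_{k=K+1}^{\infty}\frac{\alpha^k}{k!}=\frac{\alpha^{K+1}}{(K+1)!}\sum_{j=0}^{\infty}\frac{(K+1)!}{(K+1+j)!}\,\alpha^{j}\le \frac{\alpha^{K+1}}{(K+1)!}\sum_{j=0}^{\infty}\Bigl(\frac{\alpha}{K+2}\Bigr)^{j},
\end{equation}
where the inequality uses $\frac{(K+1)!}{(K+1+j)!}=\prod_{i=1}^{j}\frac{1}{K+1+i}\le\frac{1}{(K+2)^j}$. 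The hypothesis $K+2>\|A\|=\alpha$ guarantees $\alpha/(K+2)<1$, so the geometric series sums to $\bigl(1-\tfrac{\alpha}{K+2}\bigr)^{-1}$, yielding exactly $\|\mathcal{S}\|\le\rho(\alpha,K)$ as defined in~\eqref{eq:remainder}. Hence every entry of $\mathcal{S}$ lies in $[-\rho(\alpha,K),\rho(\alpha,K)]$, i.e.\ $\mathcal{S}\in\rho(\|A\|,K)\,[-E,E]=[\mathcal R](A,K)$, and therefore $\exp(A)=[\tilde{\mathcal T}](A,K)+\mathcal{S}\in[\tilde{\mathcal T}](A,K)+[\mathcal R](A,K)=[\mathcal T](A,K)$.

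The only slightly delicate point is the justification that the matrix exponential series, which defines $\exp(A)$, may be split into a finite polynomial part plus a remainder series to which the triangle inequality applies entrywise; this is immediate from absolute convergence of $\sum\|A^k\|/k!\le\sum\alpha^k/k!=e^{\alpha}<\infty$ and the fact that the infinite norm dominates each entry. I do not expect a real obstacle here — the proof is essentially the standard scalar estimate transported through $\|\cdot\|$ — the main care is simply to track that the constant produced matches $\rho(\alpha,K)$ exactly rather than some looser bound. Note that the geometric comparison is in fact the reason the denominator $1-\tfrac{\alpha}{K+2}$ appears, and the condition $K+2>\|[A]\|$ from~\eqref{eq:interval-Taylor} is precisely what makes it positive.
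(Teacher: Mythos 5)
Your proof is correct and follows essentially the same route as the paper: both reduce the entrywise containment to the operator-norm bound $\|\exp(A)-\sum_{k=0}^{K}A^k/k!\|\leq\rho(\|A\|,K)$ together with the fact that the infinite norm dominates each entry (the paper phrases this contrapositively, you phrase it directly). The only difference is that the paper cites this truncation bound as well known (from \cite{Oppenheimer:1988:AIAb}), whereas you derive it via the geometric comparison $\frac{(K+1)!}{(K+1+j)!}\leq(K+2)^{-j}$, which makes your version self-contained but does not change the argument.
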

\begin{proof}
	Suppose that $\exp(A) \notin [\mathcal{T}](A,K)$, i.e. there exist $i,j\in\{1,\ldots n\}$ such that 
	\begin{\equationstar}
		\bigl(\exp(A)\bigr)_{ij}\notin \Bigl(\sum_{k=0}^{K}\frac{A^k}{k!}\Bigr)_{ij}+ \ \rho(||A||,K)\;[-1,1].
	\end{\equationstar}
	This obviously implies
	\begin{\equationstar}
		\bigl|\bigl(\exp(A)\bigl)_{ij}-\bigr(\sum_{k=0}^{K}\frac{A^k}{k!}\bigr)_{ij}\bigr|> \rho(||A||,K).
	\end{\equationstar}
	Therefore $||\exp(A)-\sum_{k=0}^{K}\frac{A^k}{k!}||> \rho(||A||,K)$ holds, which contradicts the well known bound on the truncation error for the exponential series (see e.g. \cite{Oppenheimer:1988:AIAb}). Eventually $\exp(A) \in [\mathcal{T}](A,K)$ has to hold. $\Box$
\end{proof}

Theorem \ref{thm:Taylor} below states that ${[\mathcal{T}]}([A],K)$ is an enclosure of $\exp([A])$. It was stated in \cite{Oppenheimer:1988:AIAb} but proved with different arguments in \cite{Oppenheimer:1988:AIAa}, \cite{Oppenheimer:1988:AIAb}. Note that the usage of the fundamental theorem of interval analysis allows us to provide a proof much simpler than the one proposed in \cite{Oppenheimer:1988:AIAa}, \cite{Oppenheimer:1988:AIAb}.
\begin{theorem}\label{thm:Taylor}
	Let $[A]\in\IR^{n\times n}$ and $K\in\N$ such that $K+2>||[A]||$. Then $\exp([A]) \subseteq [\mathcal{T}]([A],K)$.
\end{theorem}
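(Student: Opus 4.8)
The plan is to apply the fundamental theorem of interval analysis directly, using $[\mathcal{T}]([A],K)$ as the interval function and $\exp$ as the real function. First I would verify hypothesis $(1)$ of the theorem: for each $A\in[A]$ we need $\exp(A)\in[\mathcal{T}](A,K)$. Since $K+2>\|[A]\|\geq\|A\|$ for every $A\in[A]$, this is precisely the content of Lemma~\ref{lem:Taylor}.

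Next I would verify hypothesis $(2)$, i.e. that $[\mathcal{T}](\,\cdot\,,K)$ is inclusion-increasing on the set $\{[C]\in\IR^{n\times n}:\|[C]\|<K+2\}$. Writing $[\mathcal{T}]([C],K)=[\tilde{\mathcal{T}}]([C],K)+[\mathcal{R}]([C],K)$, it suffices to check each summand separately, since the interval sum of inclusion-increasing operators is inclusion-increasing. The polynomial part $[\tilde{\mathcal{T}}]([C],K)=I+[C]+\tfrac12[C]^2+\cdots+\tfrac1{K!}[C]^K$ is built up from the inclusion-increasing interval operations (addition, multiplication, scalar multiplication), so it is inclusion-increasing; this is exactly the kind of argument already illustrated in the excerpt for expressions compounded of inclusion-increasing operations. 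For the remainder term, Lemma~\ref{lem:R} gives inclusion-monotonicity of $[\mathcal{R}](\,\cdot\,,K)$ on $\{[C]:\|[C]\|<K+2\}$. Hence $[\mathcal{T}](\,\cdot\,,K)$ is inclusion-increasing on that set, and in particular on $\mathbb{I}[A]=\{[C]:[C]\subseteq[A]\}$, because $[C]\subseteq[A]$ implies $\|[C]\|\leq\|[A]\|<K+2$.

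With both hypotheses in hand, the fundamental theorem of interval analysis yields $[\mathcal{T}]([A],K)\supseteq\{\exp(A):A\in[A]\}=\exp([A])$, which is the claim.

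I do not expect a serious obstacle here: the proof is essentially a bookkeeping exercise of confirming the two hypotheses of the fundamental theorem, with the two technical points (the truncation bound and the monotonicity of the remainder) already isolated as Lemmas~\ref{lem:Taylor} and~\ref{lem:R}. The only mild care needed is to note explicitly that the domain restriction $\|[C]\|<K+2$ is automatically satisfied throughout $\mathbb{I}[A]$ thanks to the monotonicity of the norm under inclusion, so that the fundamental theorem applies on the full domain $\mathbb{I}[A]$ rather than a proper subset.
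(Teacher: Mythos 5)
Your proof is correct and follows exactly the paper's own argument: verify hypothesis (1) of the fundamental theorem via Lemma~\ref{lem:Taylor}, verify hypothesis (2) by combining Lemma~\ref{lem:R} with the fact that the polynomial part is compounded of inclusion-increasing operations, and conclude. Your explicit remark that $[C]\subseteq[A]$ forces $\|[C]\|\leq\|[A]\|<K+2$ is a small but welcome addition of care over the paper's more terse treatment.
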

\begin{proof}
	First, by Lemma \ref{lem:R} $[\mathcal{R}](\;.\;,K)$ is inclusion-increasing, and therefore so is $[\mathcal{T}](\;.\;,K)$ because it is compounded of inclusion-increasing operators. Second, by Lemma \ref{lem:Taylor} $(\forall A\in[A]) \ \exp(A)\in[\mathcal{T}](A,K)$. Therefore, one can apply the fundamental theorem of interval analysis to conclude the proof. $\Box$
\end{proof}

\begin{example}\label{ex:correlation-loss-Taylor}
	Consider the interval of matrices $[A]$ defined in Example \ref{ex:correlation-loss}. Theorem \ref{thm:Taylor} with $K=16$ gives rise to the following enclosure of $\exp([A])$:
	\begin{equation}\label{eq:ex:correlation-loss-Taylor}
		\begin{pmatrix} {1+[-9\times 10^{-7}, 9\times 10^{-7}]} & {[-1.2092, 1.9582]} \\ {[-9\times 10^{-7}, 9\times 10^{-7}]} & {[-6.2557, 6.4409]} \end{pmatrix}.
	\end{equation}
	Higher order for the expansion do not improve the entries $(1,2)$ and $(2,2)$ anymore.
\end{example}

\subsection{Horner scheme}\label{ss:Horner}

The Horner evaluation of a real polynomial improves both the computation cost and the stability (see e.g. \cite{Knuth1997}). When an interval evaluation is computed, the Horner evaluation furthermore improves the effect of the loss of correlation (see \cite{Ceberio2002}). It is therefore natural to evaluate \eqref{eq:interval-Taylor} using a Horner scheme:
\begin{equation}\label{eq:interval-Horner}
\begin{array}{rl}
[\tilde{\mathcal{H}}]([A],K) := &  I+ [A]\Bigl(I+\frac{[A]}{2}\Bigl(I+\frac{[A]}{3}\Bigl( \ \  \cdots \ \ \Bigl(I+\frac{[A]}{K}\Bigr) \cdots\Bigr)\Bigr)\Bigr)
\\ {[\mathcal{H}]}([A],K) := & [\tilde{\mathcal{H}}]([A],K) + [\mathcal{R}]([A],K).
\end{array}
\end{equation}
\begin{lemma}\label{lem:Horner}
	Let $A\in\R^{n\times n}$ and $K\in\N$ such that $K+2>||A||$. Then $\exp(A) \in [\mathcal{H}](A,K)$.
\end{lemma}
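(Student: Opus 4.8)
The plan is to reduce the statement to Lemma~\ref{lem:Taylor} by showing that, on a degenerate (point) argument, the Horner enclosure and the Taylor enclosure coincide. Concretely, I will prove $[\mathcal{H}](A,K)=[\mathcal{T}](A,K)$ for every real matrix $A$, and then Lemma~\ref{lem:Taylor} immediately gives $\exp(A)\in[\mathcal{T}](A,K)=[\mathcal{H}](A,K)$ under the hypothesis $K+2>\|A\|$.

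The first ingredient is the purely algebraic identity that the nested Horner expression, evaluated with exact real matrix operations, equals the truncated exponential series $\sum_{k=0}^{K}\frac{A^{k}}{k!}$. I would establish this by a finite downward induction on the nesting depth: set $H^{(K)}:=I+\frac{A}{K}$ and $H^{(m)}:=I+\frac{A}{m}\,H^{(m+1)}$ for $m=K-1,\dots,1$, so that $H^{(1)}=\tilde{\mathcal{H}}(A,K)$ in exact arithmetic. A short computation shows $H^{(m)}=\sum_{k=0}^{K-m+1}\frac{(m-1)!}{(m+k-1)!}\,A^{k}$: the base case $m=K$ reads $I+\frac{A}{K}$, and the inductive step follows by multiplying the formula for $H^{(m+1)}$ by $\frac{A}{m}$ and adding $I$. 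Taking $m=1$ yields $H^{(1)}=\sum_{k=0}^{K}\frac{A^{k}}{k!}=\tilde{\mathcal{T}}(A,K)$.

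The second ingredient is that interval evaluation introduces no overestimation on degenerate arguments: since $A$ is a point, every interval quantity arising in the evaluation of $[\tilde{\mathcal{H}}](A,K)$ is a degenerate interval and every interval operation coincides with the corresponding real operation, so the interval value of $[\tilde{\mathcal{H}}](A,K)$ is exactly the real value computed above, namely $\sum_{k=0}^{K}\frac{A^{k}}{k!}$. Hence $[\tilde{\mathcal{H}}](A,K)=[\tilde{\mathcal{T}}](A,K)$, and since the two enclosures are obtained from these by adding the \emph{same} interval remainder $[\mathcal{R}](A,K)$, we get $[\mathcal{H}](A,K)=[\mathcal{T}](A,K)$. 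Combining this with Lemma~\ref{lem:Taylor} concludes the argument. (If outward rounding is taken into account, the numerically computed Horner value merely contains the exact value $\sum_{k=0}^{K}\frac{A^{k}}{k!}$, so the inclusion $\exp(A)\in[\mathcal{H}](A,K)$ still holds a fortiori.)

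I do not expect any real obstacle here; the only points requiring a little care are the index bookkeeping in the downward induction for $H^{(m)}$ and the (standard but worth stating) observation that evaluating an arithmetic expression on degenerate intervals yields a degenerate interval equal to the corresponding real value, so that no dependency-induced widening occurs.
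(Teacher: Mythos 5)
Your proof is correct and follows essentially the same route as the paper's: on a degenerate argument the interval Horner evaluation collapses to the exact real Horner scheme, which expands to the truncated Taylor sum, so $[\mathcal{H}](A,K)=[\mathcal{T}](A,K)$ and Lemma~\ref{lem:Taylor} finishes the job. You merely spell out, via the downward induction on $H^{(m)}$, the algebraic identity that the paper asserts in one line.
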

\begin{proof}
	As interval operations are evaluated with real arguments, the Horner scheme can be expanded exactly leading to $[\tilde{\mathcal{H}}](A,K)=[\tilde{\mathcal{T}}](A,K)$. As a consequence, $[\mathcal{H}](A,K)=[\mathcal{T}](A,K)$ and Lemma \ref{lem:Taylor} concludes the proof. $\Box$
\end{proof}
\begin{theorem}\label{thm:Horner}
	Let $[A]\in\IR^{n\times n}$ and $K\in\N$ such that $K+2>||[A]||$. Then $\exp([A]) \in [\mathcal{H}]([A],K)$.
\end{theorem}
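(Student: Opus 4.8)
The plan is to mirror exactly the structure of the proof of Theorem \ref{thm:Taylor}, invoking the fundamental theorem of interval analysis with $f=\exp$ and $[f]=[\mathcal{H}](\,.\,,K)$. Two hypotheses must be checked, and both should go through with the same arguments used for $[\mathcal{T}]$. Note first that the relevant domain is $\{[B]\in\IR^{n\times n}:||[B]||<K+2\}$, which is stable under passing to sub-boxes because $[B']\subseteq[B]$ implies $||[B']||\leq||[B]||$; in particular it contains $[A]$ together with every element of $\mathbb{I}[A]$.

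First I would verify that $[\mathcal{H}](\,.\,,K)$ is inclusion-increasing on that domain. The expression $[\tilde{\mathcal{H}}]([A],K)$ in \eqref{eq:interval-Horner} is assembled solely from the identity matrix, the interval matrix argument, interval matrix addition, interval matrix multiplication, and multiplication by the fixed positive scalars $\tfrac12,\ldots,\tfrac1K$; each of these operations is inclusion-increasing, hence so is the composite operator $[\tilde{\mathcal{H}}](\,.\,,K)$. The remainder operator $[\mathcal{R}](\,.\,,K)$ is inclusion-increasing on $\{||[B]||<K+2\}$ by Lemma \ref{lem:R}, and the sum of two inclusion-increasing operators is again inclusion-increasing, so $[\mathcal{H}](\,.\,,K)$ is inclusion-increasing on this domain. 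Second, the pointwise containment $\exp(A)\in[\mathcal{H}](A,K)$ for every $A\in[A]$ is precisely Lemma \ref{lem:Horner}, which applies since $K+2>||A||$ for each such $A$ (this follows from $||A||\leq||[A]||<K+2$).

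With both hypotheses established, the fundamental theorem of interval analysis yields $[\mathcal{H}]([A],K)\supseteq\{\exp(A):A\in[A]\}=\exp([A])$, which is the assertion. I do not anticipate any genuine obstacle: the only point requiring a little care is that inclusion-monotonicity holds on $\{||[B]||<K+2\}$ rather than on all of $\IR^{n\times n}$, but this causes no difficulty because that set already contains $[A]$ and all its sub-boxes, exactly as in the proof of Theorem \ref{thm:Taylor}. In effect the proof is obtained from the proof of Theorem \ref{thm:Taylor} by replacing $[\mathcal{T}]$ with $[\mathcal{H}]$ and Lemma \ref{lem:Taylor} with Lemma \ref{lem:Horner}.
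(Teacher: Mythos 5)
Your proof is correct and follows essentially the same route as the paper: it establishes inclusion-monotonicity of $[\mathcal{H}](\,.\,,K)$ from Lemma~\ref{lem:R} and the fact that the Horner expression is built from inclusion-increasing operations, invokes Lemma~\ref{lem:Horner} for the pointwise containment, and concludes via the fundamental theorem of interval analysis. Your added remark about the domain $\{||[B]||<K+2\}$ being stable under taking sub-boxes is a useful clarification that the paper leaves implicit.
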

\begin{proof}
	As a consequence of Lemma \ref{lem:R}, $[H](\,.\,,K)$ is compounded of inclusion-increasing operator and is hence inclusion increasing while Lemma \ref{lem:Horner} shows that $\exp(A) \in [\mathcal{H}](A,K)$. Therefore, one can use the fundamental theorem of interval analysis to conclude the proof. $\Box$
\end{proof}

\begin{example}\label{ex:correlation-loss-Horner}
	Consider the interval of matrices $[A]$ defined in Example \ref{ex:correlation-loss}. Theorem \ref{thm:Horner} with $K=16$ then to the following enclosure of $\exp([A])$:
	\begin{\equationstar}
		\begin{pmatrix} {1+[-1.1\times 10^{-6}, 1.1\times 10^{-6}]} & {[-0.0706, 0.7352]} \\ {[-1.1\times 10^{-6}, 1.1\times 10^{-6}]} & {[-1.2056, 1.2117]} \end{pmatrix}.
	\end{\equationstar}
	This enclosure is sharper than the one computed using the Taylor series: as it was forseen, the Horner evaluation actually improves the loss of dependency introduced by the interval evaluation in the expression of the Taylor expansion of the matrix exponential.
\end{example}

\subsection{Scaling and squaring process}\label{ss:scaling-squaring}

The scaling and squaring process is one of the most efficient way to compute a real matrix exponential. It consists of first computing $\exp(A/2^L)$ and then squaring $L$ times the resulting matrix:
\begin{\equationstar}
	\exp(A)=\bigl(\exp(A/2^L)\bigr)^{2^L}.
\end{\equationstar}
Therefore, one first has to compute $\exp(A/2^L)$. This computation is actually much easier than $\exp(A)$ because $||A/2^L||$ can be made much smaller than $1$. Usually, Pad\'e approximations are used to compute $\exp(A/2^L)$. However, this technique has not been extended to interval matrices, hence we propose here to use the Horner evaluation of the Taylor series instead. Therefore, we propose the following operator for the enclosure of an interval matrix exponential: Let $K$ and $L$ be such that $(K+2)2^L>||[A]||$ and define

\begin{equation}\label{eq:interval-SS}
[\mathcal{S}]([A],L,K):=\Bigl([\mathcal{H}]\bigl([A]/2^L,K\bigr)\Bigr)^{2^L}.
\end{equation}
The exponentiation in \eqref{eq:interval-SS} is of course computed with $L$ successive interval matrix square operations.

\begin{theorem}\label{thm:SS}
	Let $[A]\in\IR^{n\times n}$ and $K,L\in\N$ such that $(K+2)\,2^L>||[A]||$. Then $\exp([A]) \subseteq [\mathcal{S}]([A],L,K)$.
\end{theorem}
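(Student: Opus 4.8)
The plan is to combine the already-established enclosure property of the Horner operator (Theorem~\ref{thm:Horner}) with the inclusion-monotonicity of interval multiplication, exactly in the spirit of the previous proofs in this section via the fundamental theorem of interval analysis. First I would verify that the hypothesis $(K+2)\,2^L>||[A]||$ makes the inner call well-posed: since $||[A]/2^L|| = ||[A]||/2^L < K+2$, Theorem~\ref{thm:Horner} applies to $[A]/2^L$, so for every real $A\in[A]$ we have $\exp(A/2^L)\in[\mathcal{H}]([A]/2^L,K)$.

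Next I would record the scalar matrix identity $\exp(A)=\bigl(\exp(A/2^L)\bigr)^{2^L}$, valid for any fixed $A\in\R^{n\times n}$. The squaring map $X\mapsto X^{2^L}$, when implemented as $L$ successive interval square operations, is inclusion-increasing: each interval matrix product is inclusion-increasing by the properties of interval arithmetic recalled in Section~\ref{s:IA} (componentwise it is built from inclusion-increasing scalar operations), and a composition of inclusion-increasing maps is inclusion-increasing. Applying this monotone squaring map to the containment $\exp(A/2^L)\in[\mathcal{H}]([A]/2^L,K)$ (viewing $\exp(A/2^L)$ as a degenerate interval matrix) yields $\bigl(\exp(A/2^L)\bigr)^{2^L}\in\bigl([\mathcal{H}]([A]/2^L,K)\bigr)^{2^L}=[\mathcal{S}]([A],L,K)$, hence $\exp(A)\in[\mathcal{S}]([A],L,K)$ for every $A\in[A]$.

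There are two equivalent ways to finish, and I would phrase it through the fundamental theorem of interval analysis to match the paper's style: the operator $[A]\mapsto [\mathcal{S}]([A],L,K)$ is inclusion-increasing (the scaling by $2^{-L}$ is inclusion-increasing, $[\mathcal{H}](\,.\,,K)$ is inclusion-increasing by Lemma~\ref{lem:R} as noted in the proof of Theorem~\ref{thm:Horner}, and the $L$-fold squaring is inclusion-increasing as above), and the pointwise property $\exp(A)\in[\mathcal{S}](A,L,K)$ holds for each $A\in[A]$ by the computation of the preceding paragraph applied with the degenerate interval $[A,A]$ — here one uses that interval operations on degenerate intervals reduce to real operations, so $[\mathcal{H}](A/2^L,K)$ and its squarings are computed exactly. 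Then the fundamental theorem gives $\exp([A])=\{\exp(A):A\in[A]\}\subseteq[\mathcal{S}]([A],L,K)$.

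The only mildly delicate point — and the one I would be most careful about — is the bookkeeping in the exponent: $[\mathcal{S}]$ is defined via $L$ successive interval \emph{square} operations, not via raising to the literal power $2^L$ in one interval computation, so I must apply inclusion-monotonicity $L$ times in sequence (each squaring preserves the inclusion) rather than invoking a single ``power is inclusion-increasing'' fact, and I should note that for the degenerate argument these $L$ squarings reproduce the real matrix $\bigl(\exp(A/2^L)\bigr)^{2^L}$ exactly. Everything else is a routine reuse of Lemma~\ref{lem:R}, Theorem~\ref{thm:Horner}, and the fundamental theorem of interval analysis.
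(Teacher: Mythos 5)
Your proposal is correct and follows essentially the same route as the paper: apply Theorem~\ref{thm:Horner} to $[A]/2^L$ (justified by $||[A]||/2^L<K+2$), then use the fact that the $L$-fold interval squaring of $[\mathcal{H}]([A]/2^L,K)$ encloses $\bigl(\exp(A/2^L)\bigr)^{2^L}=\exp(A)$ for each $A\in[A]$. The extra care you take about the $L$ successive squarings and the optional reformulation via the fundamental theorem are sound but add nothing beyond the paper's argument.
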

\begin{proof}
	By Theorem~\ref{thm:Horner}, we have $\exp(A/2^L)\in[\mathcal{H}]\bigl([A]/2^L,K\bigr)$ for an arbitrary $A\in[A]$. The interval evaluation $[M]^{2^L}$ of an arbitrary interval matrix $[M]$ encloses $\{M^{2^L}:M\in[M]\}$, and therefore $[\mathcal{S}]([A],L,K)\ni\exp(A/2^L)^{2^L}$. This concludes the proof as this holds for an arbitrary $A\in[A]$. $\Box$
\end{proof}


\begin{example}\label{ex:correlation-loss-SS}
	Consider the interval of matrices $[A]$ defined in Example \ref{ex:correlation-loss}. Theorem \ref{thm:SS} with $L=10$ and $K=10$ leads to the following enclosure of $\exp([A])$:
	\begin{\equationstar}
		\begin{pmatrix} {1+[-5.7 \times 10^{-13}, 9.1 \times 10^{-13}]} & {[0.3165, 0.4325]} \\ {[-2.4 \times 10^{-19}, 2.4 \times 10^{-19}]} & {[0.0496, 0.1355]} \end{pmatrix}.
	\end{\equationstar}
	This enclosure is much sharper than the two previously computed using the Taylor series (cf. Example \ref{ex:correlation-loss-Taylor}) and its Horner evaluation (cf. Example \ref{ex:correlation-loss-Horner}). It is also very close to the optimal enclosure~\eqref{eq:interval-hull-exp}. The computation cost for $L$ and $K$ is approximately the same as the Horner scheme with order $L+K$.
\end{example}


\section{Experiments}\label{s:experimentations}

In this section, we compare the interval Horner evaluation of the truncated Taylor series versus the interval scaling and squaring method. The direct Taylor series is not presented as it is similar but always worth that its Horner evaluation. In order to compare these two enclosures, we use the width of these interval enclosure: Let $\iwid[M]$ be the real matrix formed of the widths of the entries of $[M]$. We will use the $||\iwid[M]||$ as a quality measure of the enclosure $[M]$. Experimentations have been carried out using Mathematica \cite{Mathematica}. Subsection \ref{ss:detailed1} presents a detailed study of the exponentiation of one real matrix, while Subsection \ref{ss:detailed2} deal with an interval matrix exponentiation.

As explained in introduction, the comparisons presented in this section do not include the interval matrix enclosure method proposed in \cite{Oppenheimer:1988:AIAa,Oppenheimer:1988:AIAb,Oppenheimer:1988:AIAc}. However, this method is based on a interval Horner evaluation and thus cannot give rise to better enclosures than the interval Horner evaluation of the center matrix, which is of poor quality as demonstrated above.

\subsection{Interval exponentiation of a real matrix}\label{ss:detailed1}

In this section, we consider the matrix $A$ defined by
\begin{\equationstar}
	A:=\begin{pmatrix}-131& 19& 18 \\ -390& 56& 54 \\ -387& 57& 52\end{pmatrix}
\end{\equationstar}
proposed in \cite{Bochev:1989:SVN}. This matrix is difficult to exponentiate because it has significant eigenvalue separation and a poorly conditioned eigenvector set.

The matrix $A$ is too difficult to exponentiate using the Horner evaluation of the truncated interval Taylor series: As $||A||=500$ the Taylor series requires an expansion of order greater than $502$. Computing $[H](A,K)$ using double precision does not provide any meaningful enclosure. Figure~\ref{fig:fig1} shows the quality of the enclosure obtained for different orders ranging from $1500$ to $1800$ and two different precisions for computations (the Mathematica \cite{Mathematica} arbitrary precision interval arithmetic was used). It shows that no meaningful enclosure is obtained for precision less than $p=110$ digits or order less than $K=1550$ using the Horner interval evaluation of the Taylor expansion.

    

\begin{figure}[t!]
	\begin{center}
		\includegraphics[scale=.8]{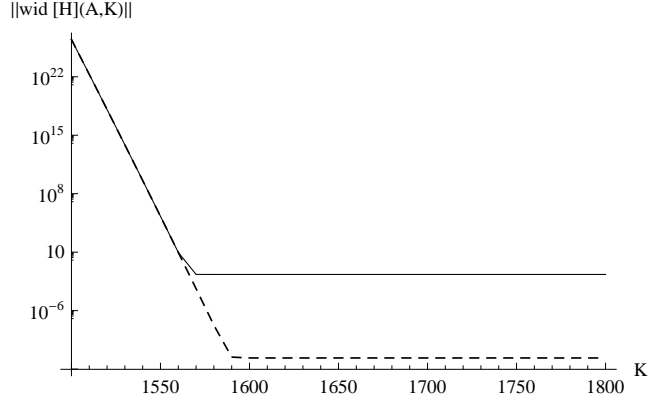}
		\caption{\label{fig:fig1}Plots of $||\iwid [H](A,K)||$ w.r.t. $K$, for two different precisions: Plain line for $p=110$ digits precision, and dashed line for $p=120$ digits precision.}
	\end{center}
\end{figure}

The interval scaling and squaring formula gives rise to $||\iwid [S](A,12,12)||\approx 7.2\times 10^{-6}$ computed using the standard double precision arithmetic. This can be improved using a decomposition $A=PMP^{-1}$ where $M$ is easier to exponentiate. Then, one can compute $\exp A=P\exp(P^{-1}AP)P^{-1}$ (where $P^{-1}$ has to be rigorously enclosed in order to maintain the rigorousness of the process). Using the Shur-decomposition, we obtain
\begin{equation}
	||\iwid P\,[S](P^{-1}AP,12,12)\,P^{-1}||\approx 7.2\times 10^{-11}.
\end{equation}

\subsection{Interval exponentiation of an interval matrix}\label{ss:detailed2}

In order to compare the different methods, we will use $0.1 A$ which is simpler to exponentiate. We have exponentiated $[A_\epsilon]:=0.1A+[-\epsilon,\epsilon]$ for various values of $\epsilon$ inside $[10^{-16},1]$ and the results are plotted on Figure \ref{fig:loglog}.
\begin{figure}[t!]
	\begin{center}
		\includegraphics[scale=0.85]{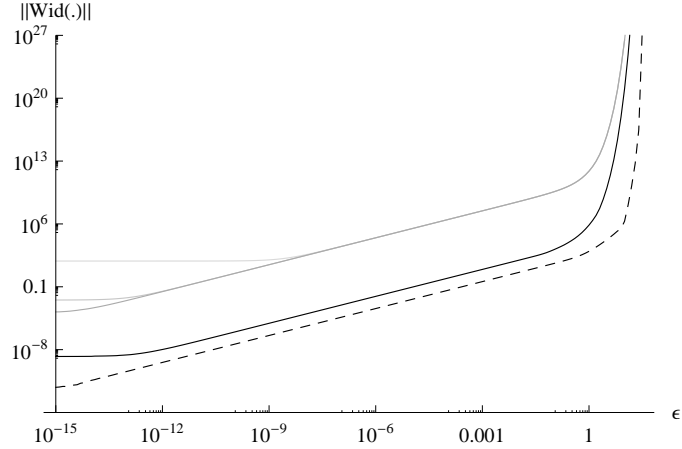}
		\caption{Log-log graphics for the comparison of the quality for different enclosing methods.\label{fig:loglog}}
	\end{center}
\end{figure}
The three plain gray curves represent $||\iwid[H]([A_\epsilon],K)||$ for $K=150$, $K=160$ and $K=170$ (increasing $K$ improves the enclosure, until $K=170$ above which no significantly improvement is shown). The black curve represents $||\iwid[S]([A_\epsilon],10,10)||$. The dashed line represents
\begin{equation}
	||\iwid\hull\{\exp \lb{A}_\epsilon,\exp \ub{A}_\epsilon\}||,
\end{equation}
which is a lower bound of $||\iwid\hull\exp([A_\epsilon])||$. Each plot show three phases: The first phase shows flat plots, then $||\iwid(\,\cdot\,) ||$ increases linearly\footnote{The linear plot displayed within the log-log scale indicates a polynomial behavior, the polynomial degree being fixed by the slope in the log-log representation. Here, the slope is one inside the log-log plot and thus so is the degree of the polynomial.} w.r.t. $\epsilon$ before eventually exponentially increasing.

During the flat phase, the rounding errors represent the main contribution to the final width of the enclosure. Thus, decreasing $\epsilon$ does not decrease the width of the final enclosure.

The linear phase is the most interesting. For these values of $\epsilon$, the width of $\hull\exp([A_\epsilon])$ growth linearly because the contribution of quadratic terms are negligible. On the other hand, the interval evaluations $[H]([A_\epsilon],K)$ and $[S]([A_\epsilon],10,10)$ are pessimistic, but it is well known that the pessimism of interval evaluation grows linearly w.r.t. the width of the interval arguments. Thus, the computed enclosure show a linear growth w.r.t. $\epsilon$ which are approximately
\begin{eqnarray}
	||\iwid[H]([A_\epsilon],K)|| & \approx & 1.17\times10^{-4} + 2.86\times 10^{10} \, \epsilon
	\\ ||\iwid[S]([A_\epsilon],10,10)|| & \approx & 1.80\times10^{-9} + 8.59\times 10^3 \, \epsilon.
\end{eqnarray}
This cleary shows how smaller is the pessimism introduced by the interval scaling and squaring process.

Finally, both the interval Horner evaluation and the interval scaling and squaring process show an exponential growth when $\epsilon$ is too large. The lower bound represented by the dashed line also shows a exponential growth, which proves that this is inherent to the exponentiation of an interval matrix. For such $\epsilon$, some matrices inside $[A_\epsilon]$ eventually see some of their eigenvalues becoming positive, leading to some exponential divergence of the underlying dynamical system, which is also observed on the matrices exponential.


\section*{Ackowledgements}

The author would like to thank the University of Central Arkansas, USA, who partially funded this work, and in particular Doctor Chenyi Hu for his helpful comments.



\end{document}